\newcommand{\pleinepage}%
{\setlength{\oddsidemargin}{0in}\setlength{\textwidth}{6.26in}\setlength{\topmargin}{0in}\setlength{\textheight}{8.7in}}
\newcommand{\grossepage}%
{\setlength{\oddsidemargin}{-0.5cm}\setlength{\textwidth}{17.5cm}\setlength{\topmargin}{-1.5cm}\setlength{\textheight}{24cm}}
\newcommand{\defit}[1]%
{{\em #1}}
\def\Re{\mathop{\plainRe\mkern -2mu\mit e}\nolimits}
\def\Im{\mathop{\plainIm\mkern -2mu\mit m}\nolimits}
\def\surl#1_#2{\mathrel{\mathop{\kern 0pt #1}\limits_{#2}}}
\newcommand{\fleche}[1]%
{\rTo^{#1}}
\newcommand{\fonction}[5]%
{\begin{diagram}
#2 & {} &\rTo^{#1} & {} & #3 \\
#4 & {} &\rMapsto & {} & #5 
\end{diagram} }
\newcommand{\sfonction}[5]%
{$\begin{array}{ccc}#2 & {\buildrel #1 \over \rightarrow} & #3 \\#4 & \mapsto & #5 \\ \end{array}$ }
\newcommand{\accolade}[1]%
{\begin{cases}  #1 \end{cases}}
\newcounter{nbre}
\newcommand{\entete}[6]%
{{\large \noindent%
\mbox{\begin{tabular}{c} #1 \\ #2 \end{tabular}}\hspace{\fill}\mbox{\begin{tabular}{c} #3 \\ #4 \end{tabular}}\vspace{1cm}\begin{center}{\Huge \textsc{#5}} \\ \vspace{0.5cm}\begin{tabular}{c} #6 \\ \hline \end{tabular}\end{center}\bigskip}}
\newcommand{\defifont}{ \sc }
\par \vspace{0.3cm}\ \\ \noindent {{  \textsc{\textbf{Definitions}}} 
\sffamily\renewcommand{\em}{\normalfont\itshape}}{\par
\newcommand{\gloss}[1]%
 {\index{{#1}@{#1}}{\em #1}\relax}
\newcommand{\xgloss}[2]%
 {\index{{#1}@{#1}!{#2}@{#2}}{\em #1\relax #2}\relax}
 \newcommand{\glossref}[2]%
 {\index{{#2}@{#1}}{\em #1}\relax}
 \newcommand{\xglossref}[4]%
 {\index{{#3}@{#1}!{#4}@{#2}}{\em #1 \relax #2 }\relax}
\newcounter{compteur}
\renewcommand{\thecompteur}{\thesection.\arabic{compteur}}
\newenvironment{dfn}[1][]%
{\refstepcounter{compteur} \par \vspace{0.3cm}\ \\ \noindent {{  \textsc{\textbf{Definition}}} 
    \textbf{\thecompteur} \ 
    }---\ \sffamily\renewcommand{\em}{\normalfont\itshape}}{\par
    \vspace{0.3 cm}}
{\refstepcounter{compteur} \par \vspace{0.3cm}\ \\ \noindent {{  \textsc{\textbf{Definitions}}} 
    \thecompteur \ 
    }---\ \sffamily\renewcommand{\em}{\normalfont\itshape}\begin{enumerate}}{\end{enumerate}
    \par \vspace{0.5 cm}}
\newcounter{theonum}\setcounter{theonum}{0}
\newenvironment{thm}[1][]%
{\refstepcounter{theonum} \par \vspace{0.3cm}\ \\ \noindent {{  \textsc{\textbf{Theorem}}} 
    \textbf{\thecompteur} \ 
    }---\ \sffamily\renewcommand{\em}{\normalfont\itshape}}{\par
    \vspace{0.3 cm}}
\newcommand\addpage[2]{#2, page #1}
\renewcommand\p@theonum{\protect\addpage{\thepage}}
\newenvironment{prop}[1][]%
{\refstepcounter{compteur} \par \vspace{0.2cm}\ \\ \noindent {{  \textsc{\textbf{Proposition}}} 
    \textbf{\thecompteur} \ 
    }---\ \sffamily\renewcommand{\em}{\normalfont\itshape}}{\par
    \vspace{0.2 cm}}
\newenvironment{prop*}[1][]%
{ \par \vspace{0.2cm}\ \\ \noindent {{\textsc{\textbf{Proposition}}} 
    #1 \ 
    }---\sffamily\renewcommand{\em}{\normalfont\itshape}}{\par \vspace{0.3 cm}}
{\refstepcounter{compteur} \par \vspace{0.3cm}\ \\ \noindent {{  \textsc{\textbf{Properties}}} 
    \thecompteur \ 
    }---\ \sffamily\renewcommand{\em}{\normalfont\itshape}}{\par
    \vspace{0.3 cm}}
\par \vspace{0.3cm}\ \\ \noindent {{ \textsc{\textbf{Corollary}}}
\sffamily\renewcommand{\em}{\normalfont\itshape}}{\par  \vspace{0.3 cm}}
\par \vspace{0.2cm}\ \\ \noindent {{  \textsc{\textbf{Lemma}}} 
\sffamily\renewcommand{\em}{\normalfont\itshape}}{\par
\renewenvironment{proof}%
{\par \vspace{0.2cm}\ \\ \noindent{ { \textsc{Proof}}\,---\ } }{\hfill{$\Box$} \par \vspace{0.2 cm}}
\par \vspace{0.2cm}\ \\ \noindent{\sc \textbf{Remark}\ }---\ }{\par \vspace{0.2cm}}
\par \vspace{0.2cm}\ \\ \noindent{\sc \textbf{Conjecture} \textbf{\thecompteur}\ }---\ }{\par \vspace{0.2cm}}
\begin{document}

\NoCompileMatrices
% fibrations
\def\ds{\displaystyle}
\def\pn{\pi_{n}}
\def\pnu{\pi_{n-1}}
\pagestyle{fancy}
% with this we ensure that the chapter and section
% headings are in lowercase.
%\renewcommand{\chaptermark}[1]{\markboth{#1}{}}
\renewcommand{\sectionmark}[1]{\markright{\thesection\ #1}}
\fancyhf{} % delete current setting for header and footer
%\fancyhead[LE,RO]{\;\thepage}

%\fancyhead[CO]{\textsc{}}
%\fancyhead[CE]{\textsc{Paolo Antonini}}

\renewcommand{\headrulewidth}{0.16pt}
\renewcommand{\footrulewidth}{0pt}
\addtolength{\headheight}{0.7pt} % make space for the rule
%\fancypagestyle{plain}{%
%\fancyhead{} % get rid of headers on plain pages
\renewcommand{\headrulewidth}{0pt} % and the line

% quando il footnote e' della stessa dimensione:
\newcommand{\foot}[1]{\footnote{\begin{normalsize}#1\end{normalsize}}}

% o p e r a t o r i     m a t e m a t i c i    

\def\bX{\partial X}
\def\dim{\mathop{\rm dim}}
\def\Re{\mathop{\rm Re}}
\def\Im{\mathop{\rm Im}}
\def\I{\mathop{\rm I}}
\def\Id{\mathop{\rm Id}}
\def\grad{\mathop{\rm grad}}
\def\vol{\mathop{\rm vol}}
\def\SU{\mathop{\rm SU}}
\def\SO{\mathop{\rm SO}}
\def\Aut{\mathop{\rm Aut}}
\def\End{\mathop{\rm End}}
\def\GL{\mathop{\rm GL}}
\def\Cinf{\mathop{\mathcal C^{\infty}}}
\def\Ker{\mathop{\rm Ker}}
\def\Coker{\mathop{\rm Coker}}
\def\dom{\mathop{\rm Dom}}
\def\Hom{\mathop{\rm Hom}}
\def\Ch{\mathop{\rm Ch}}
\def\sign{\mathop{\rm sign}}
\def\SF{\mathop{\rm SF}}

\def\AS{\mathop{\rm AS}}
\def\spec{\mathop{\rm spec}}
\def\Ric{\mathop{\rm Ric}}
\def\ch{\mathop{\rm ch}}
\def\Ch{\mathop{\rm Ch}}

\def\ev{\mathop{\rm ev}}
\def\id\textrm{Id}
\def\dd{\mathcal{D}(d)}
\def\Cli{\mathbb{C}l(1)}
\def\kerd{\operatorname{ker}(d)}

%                  l e t t e r e     g r e c h e 
\def\Fi{\Phi}

\def\de{\delta}
\def \dl{\partial L_x^0}
\def\e{\eta}
\def\ep{\epsilon}
\def\ro{\rho}
\def\a{\alpha}
\def\o{\omega}
\def\O{\Omega}
\def\b{\beta}
\def\la{\lambda}
\def\th{\theta}
\def\s{\sigma}
\def\t{\tau}
\def\g{\gamma}
\def\D{\Delta}
\def\G{\Gamma}
\def \fol{\mathcal F}
\def\R{\mathbin{\mathbb R}}
\def\Rn{\R^{n}}
\def\C{\mathbb{C}}
\def\Cm{\mathbb{C}^{m}}
\def\Cn{\mathbb{C}^{n}}
\def\gr{\mathcal{G}}
% Some special symbols
\def\Kahler{{K\"ahler}}
\def\w{{\mathchoice{\,{\scriptstyle\wedge}\,}{{\scriptstyle\wedge}}
{{\scriptscriptstyle\wedge}}{{\scriptscriptstyle\wedge}}}}
% Calligraphic and bold abbreviations
\def\cA{{\cal A}}\def\cL{{\cal L}}
\def\cO{{\cal O}}\def\cT{{\cal T}}\def\cU{{\cal U}}
\def\cD{{\cal D}}\def\cF{{\cal F}}\def\cP{{\cal P}}\def\cH{{\cal H}}\def\cL{{\cal L}}
\def\cB{{\cal B}}

% N O R M A   D I    U N    V E T T O R E 

\newcommand{\n}[1]{\left\| #1\right\|}% grande palla B di raggio R

%%%%%%%%%%%%%%%%%%%%%%%%%%%%%%%%%%%%%%%%%%%%%%%%%%%%%%%%%%%%%%%%%%%%%%%%%%%%%%%%
%%%%%%%%%%%%%%%%%%%%%%%%%%%%%%%%%%%%%%%%%%%%%%%%%%%%%%%%%%%%%%%%%%%%%%%%%%%%%%%5
\def\Z{\mathbb{Z}}
\def\cgs{C^{*}(\Gamma,\sigma)}
\def\bcgs{C^{*}(\Gamma,\bar{\sigma})}
\def\cgsr{C^{*}_{red}(,\sigma)}
\def\Mt{\tilde{M}}
\def\Et{\tilde{E}}
\def\Vt{\tilde{V}}
\def\Xt{\tilde{X}}
\def\N{\mathbb{N}}
\def\Nbs{\N^{\bar{\s}}}
\def\rcab{\ro^{[c]}_{\a-\b}}
\def\rc{\ro^{[c]}}
\def\Cd{\mathbb{C}^{d}}
\def\tr{\mathop{\rm tr}}\def\tralg{\tr{}^{\text{alg}}}       

%%%%%%        T R A C C E   %%%%%%%%%%%%%%%
\def\TR{\mathop{\rm TR}}\def\trace{\mathop{\rm trace}}
\def\STR{\mathop{\rm STR}}
\def\trG{\mathop{\rm tr_\Gamma}}
\def\TRG{\mathop{\rm TR_\Gamma}}
\def\Tr{\mathop{\rm Tr}}
\def\Str{\mathop{\rm Str}}
\def\Cl{\mathop{\rm Cl}}
\def\Op{\mathop{\rm Op}}
\def\supp{\mathop{\rm supp}}
\def\scal{\mathop{\rm scal}}
\def\ind{\mathop{\rm ind}}
\def\Ind{\mathop{\mathcal I\rm nd}\,}
\def\Diff{\mathop{\rm Diff}}
\def\T{\mathcal{T}}
\def\dn{\textrm{dim}_{\Lambda}}
\def \lke{\textrm L^2-\textrm{Ker}}

%Connessione
\newcommand{\bcot}{{}^bT^*X}
\newcommand{\wt}{\widetilde}
\newcommand{\go}{\mathcal{G}^{0}}
\newcommand{\dii}{(d_x^{k-1})^\ast}
\newcommand{\di}{d_x^{k-1}}
\newcommand{\ra}{\operatorname{range}}
\newcommand{\rb}{\rangle}
\newcommand{\lb}{\langle}
\newcommand{\re}{\mathcal{R}}
\newcommand{\vo}{\operatorname{End}_{\Lambda}(E)}
\newcommand{\mt}{\mu_{\Lambda,T}}
\newcommand{\tru}{\operatorname{tr}_{\Lambda}}
\newcommand{\buno}{B^1_{\Lambda}(E)}
\newcommand{\bdue}{B^2_{\Lambda}(E)}
\newcommand{\clis}{H^{2k}_{(2),dR}(L_x^0)}
\newcommand{\cali}{L^2(\Omega^{2k}(\partial L_x^0))}
\newcommand{\binf}{B^{\infty}_{\Lambda}(E)}
\newcommand{\bif}{B^{f}_{\Lambda}(E)}
\newcommand{\bsupp}{\operatorname{WF}'_b}
\newcommand{\vn}{\operatorname{End}_{\mathcal{R}}}
\newcommand{\ho}{\operatorname{Hom}_{\Lambda}}
\newcommand{\spc}{\operatorname{spec}_{\Lambda,e}}
\newcommand{\ix}{\operatorname{Ind}_{\Lambda}}
\newcommand{\cic}{C^{\infty}_c(L_x;E_{|L_x})}
\newcommand{\ci}{C^{\infty}_c(L_x;E_{|L_x})}
\newcommand{\tx}{\{T_x\}_{x\in X}}
\newcommand{\cc}{C^{\infty}_c(X)}
\newcommand{\rom}{\underline{\mathcal{R}_0}}
\newcommand{\roma}{(\mathcal{R}_0)_{|\partial X_0}     }
\newcommand{\dfo}{D^{\mathcal{F}_{\partial}}}
\newcommand{\deu}{D_{\epsilon,u}}
\newcommand{\deupp}{D^{+}_{\epsilon,u}}
\newcommand{\deum}{D^{-}_{\epsilon,u}}
\newcommand{\deuf}{D_{\epsilon,u}^{\mathcal{F}_{\partial}}}
\newcommand{\deufo}{D_{\epsilon,u,x_0}^{\mathcal{F}_{\partial}}}
\newcommand{\pie}{\Pi_{\epsilon}}
\newcommand{\pal}{\partial L_x}
\newcommand{\pr}{\partial_r}
\newcommand{\inbl}{\int_{\partial L_x} }
\newcommand{\pkp}{\chi_{\{0\}}(D^+_x)}
\newcommand{\deup}{D_{\epsilon,x}^{\pm}}
\newcommand{\dext}{D_{\epsilon,\mp u,x}^{\pm}}
\newcommand{\dex}{D_{\epsilon,\pm u,x}^{\pm}}
\newcommand{\ext}{\operatorname{Ext}(D_{\epsilon,x}^{\pm})}
\newcommand{\eppu}{0<|u|<\epsilon}
\newcommand{\hdeupx}{e^{-tD^2_{\epsilon,u,x}}}
\newcommand{\udif}{\operatorname{UDiff}}
\newcommand{\ki}{L^2(\Omega^kL_x^0)}
\newcommand{\uc}{\operatorname{UC}}
\newcommand{\op}{\operatorname{Op}}
\newcommand{\deux}{D_{\epsilon,u,x}}
\newcommand{\pk}{\phi_k}
\newcommand{\hdeupsx}{e^{-sD^2_{\epsilon,u,x}}}
\newcommand{\hdeups}{e^{-sD^2_{\epsilon,u}}}
\newcommand{\hdeut}{e^{-tD^2_{\epsilon,u}}}
\newcommand{\indu}{\operatorname{ind}_{\Lambda}}
\newcommand{\stru}{\operatorname{str}_{\Lambda}}
\newcommand{\deuq}{D^2_{\epsilon,u}}
\newcommand{\intk}{\int_{\sqrt{k}}^{\infty}}
\newcommand{\dmd}{d\mu_{\Lambda,D_{\epsilon,u}}(x)}
\newcommand{\defox}{D_{x}^{\mathcal{F}_{\partial}}}
\newcommand{\mun}{\mu_{\Lambda,D_{\epsilon,u}}(x)}
\newcommand{\tsi}{\int_{-\sigma}^{\sigma}}
\newcommand{\ak}{\lim_{k\rightarrow \infty}\operatorname{LIM}_{s\rightarrow 0}}
\newcommand{\deus}{D_{\epsilon,u}e^{-tD_{\epsilon,u}^2}}

\newcommand{\deuss}{D_{\epsilon,u}^2e^{-tD_{\epsilon,u}^2}}
\newcommand{\pkd}{\phi_k^2}
\newcommand{\eup}{e^{-tD^{+}_{\epsilon,u}D^{-}_{\epsilon,u}}}
\newcommand{\eum}{e^{-tD^{-}_{\epsilon,u} D^{+}_{\epsilon,u} }}
\newcommand{\deussx}{D_{\epsilon,u,x}e^{-tD_{\epsilon,u,x}^2}}
\newcommand{\dessx}{S_{\epsilon,u,x}e^{-tS_{\epsilon,u,x}^2}}
\newcommand{\clib}{c(\partial_r)\partial_r \phi_k^2}
\newcommand{\sk}{\int_s^{\sqrt{k}}}
\newcommand{\esm}{S_{\epsilon,u}e^{-tS_{\epsilon,u}^2}}
\newcommand{\deussxo}{D_{\epsilon,u,z_0}e^{-tD_{\epsilon,u,x_0}^2}}
\newcommand{\dessxo}{S_{\epsilon,u,z_0}e^{-tS_{\epsilon,u,z_0}^2}}
\newcommand{\deusszo}{D^{\mathcal{F}_{\partial}}_{\epsilon,u,z_0}e^{-t(D^{\mathcal{F}_{\partial}}_{\epsilon,u,x_0})2}}
\newcommand{\desszo}{S_{\epsilon,u,x_0}e^{-tS_{\epsilon,u,x_0}^2}}
\newcommand{\essp}{S_{\epsilon,u}^2}
\newcommand{\esspo}{S_{0,u}^2}
\newcommand{\dotto}{\dot{\theta}}
\newcommand{\piep}{\Pi_{\epsilon}}
\newcommand{\ome}{\Omega}
\newcommand{\deffo}{D^{\mathcal{F}_{\partial}}}
\newcommand{\nablal}{\nabla_x^l}
\newcommand{\nablak}{\nabla_y^k}
\newcommand{\kerk}{[f(P)]_{(x_0,\bullet)} }
\newcommand{\kepp}{\operatorname{Ker} (D^{\mathcal{F}_0^+})}
\newcommand{\bcm}{\Psi_{bc}^m(X)}
%PECETTA
\newcommand{\ty}{\infty}
\definecolor{light}{gray}{.95}
\newcommand{\pecetta}[1]{
$\phantom .$
\bigskip
\par\noindent
\colorbox{light}{\begin{minipage}{13.5 cm}#1\end{minipage}}
\bigskip
\par\noindent
}

%D I R A C :
\newcommand\Di{D\kern-7pt/}
%%%%%%%%%%%%%%%%%%%%%%%%%%%%%%%%%%%%%%%%%%%%%%%%%%%%%%%%%%%%%%%%%%%%%%%%%%%%%%
%%%%%%%%%%%%%%%%%%%%%%%%%%%%%%%%%%%
%         E n d    o f    T o p m a t t e r   %%%%%%%%%

\title{Generalized Dirac operators on Lorentzian manifolds and propagation of singularities}
\author{\Large Paolo Antonini\\
antonini@mat.uniroma1.it
\\
paolo.anton@gmail.com}

\maketitle
\begin{abstract}
We survey the correct definition of a generalized Dirac operator on a Space--Time and the classical result about propagation of singularities. This says that light travels along light--like geodesics. Finally we show this is also true for generalized Dirac operators.
\end{abstract}
\tableofcontents
\section{Introduction}
The celebrated theorem of Nils Denker about the propagation of singularities of a real principal type system when applied to the Dirac operator on a Lorentzian manifolds says the well--known fact that light travels along light--light geodesics with the light speed. In this paper first we give an appropiate definition of a generalized Dirac operator on a Lorentzian manifold then we show that the same result is true for generalized Dirac operator. This means that the Denker (partial) connection on the polarization set (along Hamiltonian orbits) of the system is the starting connection lifted to the cotangent bundle. We thank Bernd Ammann for giving us the correct definition of the generalized Dirac operator.

\section{The generalized Dirac operators on Lorentzian manifolds}
We start with a definition \cite{ginoux}
Let $(X,g)$ be a Lorentzian manifold.
With $\operatorname{Cl}(X)$ denote the bundle over $X$ whose fiber over $p$ is the Clifford algebra of $T_pX$ i.e the quotient of the complexified tensor algebra of $T_pX$ by the bilateral ideal generated by the elements of the form $v\otimes w+w\otimes v+2g(v,w)$. The Lorentzian metric and the connection extend to $\operatorname{Cl}(X)$ to give a metric connection which is Leibnitz with respect to the Clifford multiplication.
\begin{dfn}
A Clifford module of spin $k/2$ over $(X,g)$ is given by a complex vector bundle $S$ over $X$ with a sesquilinear product (antilinear in the second) $\langle \cdot, \cdot\rangle$ with connection $\nabla^S$ together with a smooth section $Q$ of $\operatorname{hom}({\odot}^kTX,\operatorname{End}(S))$ satysfiying the following properties
\begin{description}
\item[C1] The fibers $S_p$ are left modules over the Clifford algebras $\operatorname{Cl}(T_pX)$ i.e. there's a vector bundle homomorphism $TX\otimes S\longrightarrow S$, $Z\otimes \varphi \longmapsto Z \cdot \varphi$ such that $(Z\cdot Y+Y\cdot Z+2g(Z,Y))\cdot \varphi=0.$ 
\item[C2] The inner product is parallel, $d\langle \varphi,\psi \rangle= \langle \nabla^S \varphi,\psi \rangle + \langle \varphi,\nabla^S\psi \rangle.$
\item[C3] The Clifford action
$\operatorname{Cl}(X)\otimes S\longrightarrow S$ is parallel, $\nabla^S_Z(Y\cdot \varphi)=(\nabla^{g}_Z Y)\cdot \varphi +Y\cdot \nabla_Z^S \varphi.$
\item[C4]Clifford multiplication by tangent vectors is symmetric $\langle Z\cdot \varphi, \psi\rangle =\langle \varphi , Z\cdot \psi \rangle.$
\item[C5] The section $Q$ is parallel with respect to the connection on
$\operatorname{hom}({\odot}^kTX,\operatorname{End}(S))$ induced by the Levi--Civita connection $\nabla^g$ and $\nabla^S$.
\item[C6]$Q(\cdot,...,\cdot)$ is simmetric w.r.t. $\langle\cdot,\cdot\rangle$.
\item[C7] If $N$ is a future directed timelike vector then, putting 
$Q_N:=Q(N,...,N)$:
\begin{enumerate}
\item $Z\cdot Q_N=-Q_N Z \cdot, $ if $g(Z,N)=0$
\item $N \cdot Q_N=Q_N N\cdot.$
\end{enumerate}
\item[C8] If $N$ is a future directed timelike vector then the quadratic form $\langle\langle \cdot,\cdot \rangle \rangle_N$ defined by 

\noindent $\langle\langle \varphi,\psi \rangle \rangle_N=\langle Q_N\varphi,\psi\rangle $
is positive definite.  
\end{description}
From $C8$ it follows that $Q_N$ is invertible. From the first one of$C7$ it follows that its spectrum is symmetric w.r.t the origin, which together with with $C8$ implies that the bilinear form $\langle \cdot,\cdot \rangle$ has index $(1/2 \operatorname{rank}(S),1/2 \operatorname{rank}(S)).$
\end{dfn}
From $C7$ follows that \begin{description}
\item[C7']
\begin{enumerate}
\item For any vector $Z$ we have $Z \cdot Q_Y=-Q_YZ\cdot,$ $g(Y,Z)=0$
\item $Z \cdot Q_Z=Q_Z Z\cdot, $ again $Q_Z=Q(Z,...,Z)$. 
\end{enumerate}

\end{description}

\noindent With such a structure one can Define the generalized Dirac operator $D:C^{\ty}(X,S)\longrightarrow C^{\ty}(X,S)$ following the composition
$$\xymatrix{C^{\ty}(X,S)\ar[r]^-{i\nabla^S}&C^{\ty}(T^*X,S)\ar[r]^-{\sharp}&C^{\ty}(TX,S)\ar[r]^{\operatorname{Cl}}& C^{\ty}(X,S) }$$ 
where we used the musical isomorphism $\sharp$. In a local orthonormal frame
$e_0,...,e_n$ with $\varepsilon_j=g(e_j,e_j)$
 one can check the formula
$$D\varphi=i\sum_{j=0}^n\varepsilon_j \cdot \nabla_{e_j}^S\varphi.$$
It is a first order differential operator whose principal symbol is Clifford multiplication $$\sigma_D(\xi)=iZ\cdot  \textrm{ where }  \xi^\sharp=Z\textrm{ i.e.  }g(Z,\cdot)=\xi,$$ 
This can be seen immediately from the formula $D(f\varphi)=i \operatorname{grad}f \cdot \varphi+f D\varphi.$
In this sense one says that the Dirac operator is the quantization of the Clifford action.
\section{Propagation of singularities}
In this section $M$ is a manifold without boundary. We are going to explain the theorem about propagation of singularities for systems of real principal type of Denker \cite{denk} and apply to the generalized Dirac operator on  a Lorentzian manifold.
We shall use only classical pseudodifferential operators.
Now a scalar pseudodifferential operator on $X$ is said of real principal type if its principal symbol $\sigma$ is real and the Hamiltonian vector field $H_{\sigma}$ is non vanishing and does not have the radial direction on the zero set of $\sigma$. 
There is a corresponding notion for systems. An order $m$, $N\times N$ system $A$ of pseudodifferential operators with principal symbol $\sigma_{m}(A)$ is said of real principal type at $\eta\in T^*M$ if there exists 
on a neighborhood $U$ of $\eta$ an $N\times N$ symbol $\tilde{\sigma}$
 and a scalar symbol of real principal type $q$ such that 
\begin{equation}\label{pri}
\tilde{\sigma}(\xi)\circ \sigma_m(A)(\xi)=q(\xi)  \operatorname{Id},\,\xi \in U.
\end{equation}
 If this property holds for every $\eta$ in $T^*M$ we say $A$ is real principal type. Clearly the existence of $\tilde{\sigma}$ is only locally granted and $U$ can be assumed conical.

\begin{prop}
\label{didi}The generalized Dirac operator on a Lorentzian manifold is real principal type.
\end{prop}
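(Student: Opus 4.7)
The plan is to exhibit $\tilde\sigma$ and $q$ explicitly, directly from the Clifford structure. The natural guess is that $\tilde\sigma$ should be (a multiple of) $\sigma_D$ itself and that $q$ should be the metric polynomial $g^*(\xi,\xi)$ on the cotangent bundle; everything else then reduces to one short Clifford computation plus a check on the geodesic flow.

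First I would compute $\sigma_D(\xi)^2$. From the Clifford relation in \textbf{C1}, applied with $Y=Z$, one gets $Z\cdot Z = -g(Z,Z)\,\mathrm{Id}$ as an endomorphism of each fiber $S_p$. Since $\sigma_D(\xi)=iZ\cdot$ with $Z=\xi^{\sharp}$, this gives
\begin{equation*}
\sigma_D(\xi)\circ \sigma_D(\xi) \;=\; -Z\cdot Z\cdot \;=\; g(Z,Z)\,\mathrm{Id} \;=\; g^{*}(\xi,\xi)\,\mathrm{Id},
\end{equation*}
where I use that $\sharp$ is an isometry between $(T^{*}X,g^{*})$ and $(TX,g)$. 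Setting $\tilde\sigma(\xi):=\sigma_D(\xi)$ and $q(\xi):=g^{*}(\xi,\xi)$, this is exactly the factorization \eqref{pri} required, and in fact it holds \emph{globally} on $T^{*}X\setminus 0$, not just on a conical neighborhood of a single point.

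Second, I would verify that $q(\xi)=g^{*}(\xi,\xi)$ is a scalar classical symbol of real principal type. It is homogeneous of degree $2$ in the fibers and real-valued. Its Hamiltonian vector field on $T^{*}X$ is the geodesic spray
\begin{equation*}
H_q \;=\; 2g^{ij}(x)\xi_j\,\partial_{x_i} \;-\; \bigl(\partial_{x_k}g^{ij}(x)\bigr)\xi_i\xi_j\,\partial_{\xi_k},
\end{equation*}
whose horizontal component at $(x,\xi)$ corresponds to the vector $2\xi^{\sharp}$, nonzero whenever $\xi\neq 0$. In particular $H_q$ never vanishes away from the zero section, and since the radial direction $\xi\cdot\partial_\xi$ has \emph{no} horizontal component, $H_q$ can never be proportional to it. Thus $q$ is of real principal type on $T^{*}X\setminus 0$, and a fortiori on the characteristic set $\{q=0\}$, which is the light cone bundle.

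I do not expect a real obstacle in this argument: the proposition is essentially a repackaging of the Clifford identity together with the elementary geometry of the geodesic flow of a Lorentzian metric. The only point that requires a moment of care is the sign conventions in \textbf{C1} (to be sure that $\sigma_D^2$ produces the metric symbol with the right sign, so that the characteristic variety is the light cone and not empty), and the observation that the factorization is globally defined, so the local assumption in the definition of real principal type is automatically satisfied.
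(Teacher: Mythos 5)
Your proof is correct and, at bottom, it is the same argument as the paper's, but presented more transparently. The paper defines $\tilde\sigma(\xi)=-iQ_N^{-1}\bigl((aY-bN)\cdot\bigr)Q_N$ (writing $\xi^\sharp=Z=aY+bN$ with $g(Y,N)=0$) and then says the identity $\tilde\sigma\circ\sigma_D=\|\xi\|_g^2\,\mathrm{Id}$ follows from \textbf{C1} and \textbf{C7}. But if you actually carry out the conjugation using \textbf{C7} (namely $Q_N^{-1}Y\cdot Q_N=-Y\cdot$ and $Q_N^{-1}N\cdot Q_N=N\cdot$), you find $Q_N^{-1}(aY-bN)\cdot Q_N=-Z\cdot$, so the paper's $\tilde\sigma$ simplifies to $iZ\cdot=\sigma_D(\xi)$ — exactly your choice. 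In other words, the paper's apparently more elaborate symbol is your $\tilde\sigma$ written in a disguised form, and once one notices this, the appeal to \textbf{C7} and to the splitting $Z=aY+bN$ is superfluous: the Clifford relation \textbf{C1} alone gives $\sigma_D(\xi)^2=g^*(\xi,\xi)\,\mathrm{Id}$. Your approach has the additional small merit of being manifestly global (no choice of local timelike $N$ is needed to define $\tilde\sigma$), and your verification that $H_q$ is nowhere radial on $T^*X\setminus 0$ — because its horizontal part $2\xi^\sharp$ is nonzero while the radial field has no horizontal part — makes explicit what the paper dismisses as ``standard.''
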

\begin{proof}
Let $U\subset M$ be an open set in which a timelike future directed vector field $N$ exists. Remember that $Q_N$ is invertible. Define in $T^*U$ the following simbol, 
$$\widetilde{\sigma}(\xi)=-iQ_N^{-1}(aY-bN)Q_N$$
 if $\xi^{\sharp}=Z=aY+bN$ with $g(Y,N)=0.$ Then a straightforward computation based on $C7$ and $C1$ shows that
  $$\widetilde{\sigma}(\xi)\circ \sigma_D(\xi)=\|\xi\|_g^2 \operatorname{Id}$$ where
 $\xi\longmapsto\|\xi\|_g^2$ is the Lorentzian norm, in coordinates $\|\xi\|_g^2=g^{ij}(x)\xi_i \xi_j$ a well known Hamiltonian who generates the geodesic flow. It is standard the computation that it is of real principal type. The zero set is the light cone and the null bicharacteristics are lightlike geodesic. 
\end{proof}
\noindent For a real principal type system there's an elegant result of Denker about the propagation of singularities \cite{denk,kra}. First one defines the $C^{\ty}$--based polarization set of a vector valued  distribution.
It is a family of vector spaces in $\C^N$ (or a vector bundle $S$) over the singular support of the distribution conical in the $\xi$--variable and serves as an indicator of the direction of the strongest singularity. Here the definition:
Let $u\in \mathcal{D}'(M,\mathbb{C}^N)$ a vector valued distribution i.e. a vector of distributions $u_i\in \mathcal{D}'(M)$. It's wave front set is by definition the union of all the wave front sets of its components
$$WF(u):=\bigcup_{i=1,..N}WF(u_i).$$ In particular it does not contain any information about the components of distributions that are singular. In order to specify the singular directions in the vector space $\mathbb{C}^N$, one could consider vector--valued operators that map the vector valued distribution to a smooth scalar function, instead of just looking at scalar operators mapping the individual components to smooth functions. This approach leads to the definition of the \underline{polarization set}.
\begin{dfn}The polarization set of a distribution $u\in \mathcal{D}'(M,\mathbb{C}^N)$ is defined as 
$$WF_{\operatorname{pol}}(u):=\bigcap_{Au\in C^{\ty}(M)}\mathcal{N}_A.$$ Here, for an operator mapping the vector valued distribution to a smooth function, with (principal) symbol $a(x,\xi)$ define $$\mathcal{N}_A:=\{(x,\xi;w)\in (T^*M\setminus 0)\times \mathbb{C}^N:w\in \operatorname{ker}a(x,\xi)\}.$$ 
\end{dfn}
We stress that the intersection is taken over all $1\times N$ systems $A\in L^0(M)^N$ of classycal $\Psi$DOs. Two important properties proved by Dencker \cite{denk}

Let $u\in \mathcal{D}'(M,\mathbb{C}^N)$ and $\pi_{1,2}:T^*M\times \mathbb{C}^N\longrightarrow T^*M$ be the projection on the cotangent bundle, then
$$\pi_{1,2}(WF_{\operatorname{pol}}(u)\setminus (T^*M\times 0))=WF(u).$$ In this way the polarization set is a refinement of the wave front set.

Let $A$ be an $N\times N$ system of pseudo--differential operators on $M$ with principal symbol $a(x,\xi)$ and $u\in \mathcal{D}'(M,\mathbb{C}^N)$. Then
$$a(WF_{\operatorname{pol}}(u))\subset WF_{\operatorname{pol}}(Au),$$ where $a$ acts on the fibre: $a(x,\xi;w)=(x,\xi;a(x,\xi)w).$

 If $E$ is an $N\times N$ system of pseudo--differential operators on $M$ and its principal symbol $e(y,\eta)\neq0$, then
$$e(WF_{\operatorname{pol}}(u))=WF_{\operatorname{pol}}(Eu)$$ over a conic neighbourhood of $(y,\eta).$ 
 From this last property we see that the polarization set behaves covariantly under a change of coordinates. Thus the polarization set can be defined for distributional sections 
$\mathcal{D}'(M,E)$ 
of a vector bundle $E$ giving a well defined subset
$$WF_{\operatorname{pol}}(u)\subset \pi^*E$$ of the vector bundle lifted over the cotangent bundle.

\noindent So return to the setting before and assume we are given an $N\times N$ system $A$ of pseudodifferential operators acting on $\C^N$ of principal type and \eqref{pri} valid in $U$. Define the set $$\Omega_A:=\{\xi \in T^*X:\operatorname{det}\sigma_m(A)(\xi)=0
\}$$ locally $\Omega_A=q^{-1}(0).$ 
Furthermore one can show that $A$ is real principal type around $\eta$ if and only if around $\eta$ the set $\Omega_A$ is a hypersurface with non radial Hamiltonian field, the dimension of $\operatorname{ker}\sigma_m(A)$ is constant in $\Omega_A$ and for every normal vector $\rho \in N(\Omega_A)$ if $$\pi_C:\C^N\longrightarrow C^N/\operatorname{Im}\sigma_m(A)(\xi)=\operatorname{coker}\sigma_m(A)(\xi)$$ is the quotient mapping then  
the map
$$\pi_C(\partial_{\rho})\sigma_m(A)(\xi):\operatorname{ker}\sigma_m(A)(\xi)\longrightarrow \operatorname{coker}\sigma_m(A)(\xi)$$ is an isomorphism.

\noindent Now let $u\in C^{-\ty}(X,\C^N)$ a distribution such that $Au\in C^{\ty}$. Let denote ${\mathcal{N}_{A}}_{|_{\xi}}\subset \C^N$ the kernel of $\sigma_m(A)$ at $\xi$. Now if $(\xi,w)\in \mathcal{N}_A$, by definition it follows necessarily that the (vector valued) distribution wave front set is contained in $\Omega_A$.
$$\operatorname{WF}(u)\subset \Omega_A.$$
One can show as in the scalar Duistermaat H\"ormander result \cite{duis} the wave front set is union of null bicharacteristics of $q$ in $\Omega_A$.
Well here $q$ is not unique but it can be shown to be unique modulo the multiplication by a never vanishing function on $T^*M$. This does not affect the bicharacteristics since the wave front is conical in the $\xi$--variable. We shall call them bicharacteristics again. It remains to study the polarization vectors over these bicharacteristics. The idea of Denker is to introduce a partial connection and show that they follow the parallel transport along these null bicharacteristics and these vector fields form completely the polarization set.
The Denker connection depends also from the subprincipal symbol (\cite{duis} Sect 5.2)
of the operator\footnote{this is a genuine feature of the system nature, in the scalar case only the principal symbol contributes}
The principal symbol of $A$ has an asymptotic homogeneous expansion
$$\sigma(A)(\xi)=\sigma_m(A)(\xi)+p_{m-1}(\xi)+p_{m-2}(\xi)+,.....$$
The subprincipal symbol is defined as
$$\sigma(A)^s(\xi)=p_{m-1}(\xi)-\dfrac{1}{2i} \dfrac{\partial^2  \sigma_{m}(A)(\xi)}{\partial x^j \partial \xi_j},$$
form the "poisson bracket" of matrices (this is not anti symmetric !)
$$\{\tilde{\sigma},\sigma_m(A)\}:= \dfrac{\partial \tilde{\sigma}}{\partial_{\xi_j}}\dfrac{\partial \sigma_m(A)}{\partial x^j}-\dfrac{\partial \tilde{\sigma}}{\partial x^j}\dfrac{\partial \sigma_m(A)}{\partial \xi_j}.
$$ let $H_q$ denote the Hamiltonian vector field of $q$ defined by 
$dq(Y)=\omega(H_q,Y)$ for every $Y\in TM$, then for a smooth section $w$ of the trivial bundle $(T^*X\setminus o)\times \C^N$
define the Denker connection 
\begin{equation}\label{denko}
\nabla^Aw=H_q w +1/2\{\tilde{\sigma},\sigma_m(A)\}w+i\tilde{\sigma}\sigma(A)^sw.\end{equation}
 One can show that $\nabla^A$ resctricts to $\mathcal{N}_A$ i.e. if $w$ is a vector field along a bicharacteristic $\gamma$ in $\Omega_A$ then 
$$\nabla^A_{\dot{\gamma}}w \in \operatorname{ker}\sigma_m(A)(\gamma(t))={\mathcal{N}_A}_{|\gamma(t)}\textrm{ iff }w(\gamma(t))\in \operatorname{ker}\sigma_m(A).$$
This means that the equation $\nabla^Aw=0$ can be solved in $\mathcal{N}_A$ which is a necessary condition to be in $\operatorname{WF}_{\operatorname{pol}}(u).$
\begin{dfn}(Denker \cite{denk}) A Hamiltonian orbit of the system $A$ is a line bundle $L\subset {\mathcal{N}_A}_{|\gamma}$ where,
\begin{itemize}
\item $\gamma$ is an integral curve of the Hamiltonian field of of $\Omega_{A}$.
\item $L$ is spanned by a $C^{\ty}$ section $w$ such that $\nabla^Aw=0.$
\end{itemize}
\end{dfn}
Finally we can state the theorem about the propagation of the polarization set.
\begin{thm}(Denker \cite{denk})

\noindent Suppose $\eta\in \Omega_A$ is a point where $A$ is real principal type. If
$u$ is a vector valued distribution such that 
 $\eta\notin \operatorname{WF}(Au)$ (vector valued wave front)
 then over a neighborhood of $\eta$ in $\Omega_A$, $\operatorname{WF}_{\operatorname{pol}}$ is a union of Hamiltonian orbits of $A$.
 
 \noindent If $Au\in C^\ty$ we are saying that that the non trivial part of the polarization set of $u$ is parallel in $\mathcal{N}_A$ along the flow of $\Omega_A.$
 \end{thm}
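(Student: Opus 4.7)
The plan is to reduce the vector statement to the scalar Duistermaat–Hörmander theorem using the factorization $\tilde\sigma\circ\sigma_m(A)=q\operatorname{Id}$ from real principal type, then run a geometric-optics construction to propagate test operators along the bicharacteristic, and finally identify the resulting symbolic transport equation with the dual of the Denker connection \eqref{denko}. For the base of the orbit I would pick a classical $N\times N$ $\Psi$DO $\tilde A$ with principal symbol $\tilde\sigma$ near $\eta$, so that $\tilde A A$ has scalar principal symbol $q\operatorname{Id}$. Since $Au\in C^{\infty}$ microlocally at $\eta$, so is $\tilde A A u$, and applying the scalar Duistermaat–Hörmander theorem componentwise gives that $\operatorname{WF}(u)$ is contained in $\Omega_A$ near $\eta$ and is a union of integral curves of $H_q$, which is exactly the base of every Hamiltonian orbit.

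For the fibre, fix $(\eta,w_0)\in \operatorname{WF}_{\operatorname{pol}}(u)$ with $w_0\in\ker\sigma_m(A)(\eta)$, let $\gamma(t)$ be the bicharacteristic through $\eta$, and let $w(t)$ denote the $\nabla^A$–parallel transport of $w_0$ along $\gamma$; by the stated property of $\nabla^A$ one has $w(t)\in\ker\sigma_m(A)(\gamma(t))$. Given any $1\times N$ operator $B_1\in L^{0}(M)^{N}$ with $B_1u\in C^{\infty}$ at some $\gamma(t_1)$, I would build a one-parameter family $B(t)\in L^{0}(M)^{N}$, equal to $B_1$ modulo smoothing at $t=t_1$ and satisfying $B(t)u\in C^{\infty}$ microlocally at $\gamma(t)$ for all $t$. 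The standard recipe is to require
$$\dot B(t)\equiv C(t)\,A \pmod{\text{smoothing}},\qquad C(t)\in L^{-m}(M)^{N},$$
and to solve this order by order in the symbolic calculus. At principal order the row symbol $b(t)=\sigma_{0}(B(t))$ must satisfy a linear transport ODE along $\gamma$ driven by $H_q$ whose zero-order term is the adjoint of the matrix acting on $w$ in \eqref{denko}; at the next step of the recursion the subprincipal symbol $\sigma(A)^s$ enters, reproducing the corresponding term of \eqref{denko} on the dual side.

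The duality between $\nabla^A$ and the transport equation for $b(t)$ then implies that the pairing $b(\gamma(t))\,w(t)$ is constant along $\gamma$. Now $B(t)u\in C^{\infty}$ at $\gamma(t)$ forces $\operatorname{WF}_{\operatorname{pol}}(u)|_{\gamma(t)}\subseteq\ker b(\gamma(t))$; hence, if $(\gamma(t_1),w(t_1))\notin \operatorname{WF}_{\operatorname{pol}}(u)$ there would exist $B_1$ with $b_1(\gamma(t_1))w(t_1)\ne 0$, and by the constancy of the pairing $b(\eta)w_0\ne 0$, contradicting $(\eta,w_0)\in \operatorname{WF}_{\operatorname{pol}}(u)$. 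Running the same argument in both directions of time yields the equivalence $(\eta,w_0)\in \operatorname{WF}_{\operatorname{pol}}(u)\iff (\gamma(t),w(t))\in \operatorname{WF}_{\operatorname{pol}}(u)$ for small $t$, so $\operatorname{WF}_{\operatorname{pol}}(u)$ over the given neighbourhood of $\eta$ in $\Omega_A$ is precisely a union of Hamiltonian orbits $\{(\gamma(t),w(t))\}$ in the sense of the previous definition.

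The main obstacle is the construction of $B(t)$: at each homogeneous degree one must solve an equation of the form $c\,\sigma_m(A)=r$ modulo $\operatorname{Im}\sigma_m(A)$, and unique solvability of the obstruction relies crucially on the isomorphism $\pi_C\partial_{\rho}\sigma_m(A)\colon \ker\sigma_m(A)\to \operatorname{coker}\sigma_m(A)$ built into the definition of real principal type. Identifying the outcome on the nose with \eqref{denko}, and in particular recovering the subprincipal contribution $i\tilde\sigma\,\sigma(A)^s$ from the order-$(m-1)$ step of the recursion, is then a symbolic-calculus bookkeeping exercise; but it is precisely the appearance of $\sigma(A)^s$ that distinguishes the vector-valued theorem from the scalar Duistermaat–Hörmander result and turns the simple scalar propagation of $\operatorname{WF}(u)$ into the richer notion of a Hamiltonian orbit in $\mathcal{N}_A$.
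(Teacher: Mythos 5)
The paper does not prove this theorem: it is attributed to Dencker \cite{denk} and used as a black box, so there is no in-paper proof to compare your attempt against. As a blind reconstruction of Dencker's argument, your outline captures the two essential ingredients — reduction to the scalar Duistermaat--H\"ormander theorem via the factorization $\tilde\sigma\circ\sigma_m(A)=q\operatorname{Id}$ to control the base $\operatorname{WF}(u)\subset\Omega_A$ as a union of $H_q$-bicharacteristics, and a duality argument pairing the $\nabla^A$-parallel transport $w(t)$ of a polarization vector against a family of row test operators $B(t)$ whose symbols solve a dual transport equation, so that constancy of $b(\gamma(t))w(t)$ shows $(\gamma(t),w(t))$ stays in $\operatorname{WF}_{\operatorname{pol}}(u)$.

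The weak points are that the central construction is left almost entirely as an assertion. You do not show that the recursion $\dot B(t)\equiv C(t)A$ modulo smoothing is actually solvable order by order; this is exactly where the isomorphism $\pi_C\partial_\rho\sigma_m(A)\colon\ker\sigma_m(A)\to\operatorname{coker}\sigma_m(A)$ must be used, and in Dencker's paper this step is packaged into a microlocal normal form for real principal type systems obtained by conjugation with elliptic systems, not a naked symbolic recursion. You also do not verify that the principal-order transport equation for $b(t)$ along $H_q$ is literally the adjoint of \eqref{denko}, nor where the subprincipal contribution $i\tilde\sigma\sigma(A)^s$ appears, beyond the remark that it comes from the next step of the recursion — yet that term is the whole content distinguishing the vector theorem from the scalar one. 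As it stands the proposal is a correct roadmap of the argument rather than a proof.
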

 \noindent We are going to apply this result to the generalized Dirac operator. We know from proposition \ref{didi} that $\Omega_D=\{\xi :\|\xi\|^2_g\}$ is the light cone. Then the null bicharacteristics are the null geodesics. Let's compute explicitly the Denker connection.
  \begin{thm}The Denker connection for the generalized Dirac operator is $\nabla^S$ lifted to the cotangent bundle (along null geodesics), $\pi:T^*X\longrightarrow X.$
  
\noindent In particular the polarization set of the generalized Dirac operator is union of Hamiltonian orbits i.e. curves
$t\longmapsto \gamma(t)=(x(t),\xi(t);s(t))\in \pi^*S$ such that
\begin{itemize}
\item $x(t)$ is a null geodesic,
\item $\xi(t)^\sharp$ is the velocity of the geodesic
\item $s(t)$ is a section of $S$ along $\gamma$ parallel with respect to the connection $\pi^*\nabla^S$.
\end{itemize}
 \end{thm}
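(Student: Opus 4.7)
The plan is to compute the three summands of the Denker connection \eqref{denko} in local coordinates and verify that, when restricted to a null bicharacteristic and to the kernel of $\sigma_D$, their sum coincides with $\pi^*\nabla^S$ along the underlying geodesic. Throughout I work on an open $U\subset X$ carrying a future--directed timelike $N$, use the explicit formulas $\sigma_D(\xi)=ic(\xi^\sharp)$ and $\widetilde\sigma(\xi)=-iQ_N^{-1}(aY-bN)Q_N$ from the proof of Proposition~\ref{didi} (with $\xi^\sharp=aY+bN$, $g(Y,N)=0$), and fix local coordinates together with a local trivialization of $S$ in which $\nabla^S_{\partial_b}=\partial_b+\omega^S_b$.

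First I would treat the Hamiltonian summand. Since $q=g^{ij}\xi_i\xi_j$ generates the cogeodesic flow, a bicharacteristic $\gamma(t)=(x(t),\xi(t))\subset\Omega_D$ satisfies $\dot x^k=2g^{ik}\xi_i$ and $\dot\xi_k=-(\partial_kg^{ij})\xi_i\xi_j$; in particular $\xi^\sharp$ is (up to the affine reparametrization) the velocity of $x(t)$ and $x(t)$ is a null geodesic because $q(\gamma)\equiv 0$. For a smooth section $w$ of $\pi^*S$ this gives $H_qw=\dot w$ along $\gamma$, so $H_q$ supplies exactly the coordinate derivative in the parallel--transport equation. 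Second, the subprincipal symbol is computed from the expression $D=ig^{ab}(x)c(\partial_a)\nabla^S_{\partial_b}=ig^{ab}c(\partial_a)\partial_b+ig^{ab}c(\partial_a)\omega^S_b$. The zero--order piece $p_0$ is $ig^{ab}c(\partial_a)\omega^S_b$; the Weyl correction $-\tfrac{1}{2i}\partial_{x^j}\partial_{\xi_j}\sigma_D=-\tfrac12\partial_{x^j}[g^{aj}c(\partial_a)]$ is rewritten using axiom C3, which forces $\partial_j c(\partial_a)$ to be expressible through the Christoffel symbols of $\nabla^g$ and the connection form $\omega^S_j$. Third, the matrix Poisson bracket $\{\widetilde\sigma,\sigma_D\}$ is computed from the explicit formula for $\widetilde\sigma$, using axiom C5 (parallelism of $Q$) to differentiate $Q_N$, and axioms C1 and C7' to move $Q_N^{\pm 1}$ across Clifford factors.

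The crux of the argument is the assembly step. One plugs the three contributions into \eqref{denko}, multiplies out $i\widetilde\sigma\,\sigma(D)^s$, and tracks the four types of terms that appear: (i) the geodesic derivative $\dot w$, (ii) the expected $\omega^S$--term $\dot x^b\,\omega^S_b\,w=2g^{ib}\xi_i\omega^S_bw$, (iii) Christoffel contributions coming from the expansion of $\partial_jc(\partial_a)$, and (iv) ``quadratic'' terms involving a product of $c(\cdot)$ factors weighted by $\xi^\sharp$. Contributions of type (iv) factor through $c(\xi^\sharp)$; on the null cone $c(\xi^\sharp)^2=-g(\xi^\sharp,\xi^\sharp)=0$, so their images lie in $\mathrm{Im}\,c(\xi^\sharp)\subset\ker c(\xi^\sharp)=\mathcal{N}_D|_\xi$ and drop out of the induced connection on $\mathcal{N}_D$. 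The Christoffel terms of type (iii) coming from the subprincipal correction are matched against the Christoffel terms produced when $\widetilde\sigma$ is commuted past $\sigma_D$ in the Poisson bracket; the cancellation is dictated by the combined C3+C5 compatibility. What survives on $\mathcal{N}_D$ is precisely $\dot w+\dot x^b\omega^S_b\,w$, i.e.\ the equation $\pi^*\nabla^S_{\dot\gamma}w=0$. Hamiltonian orbits of $D$ are thus line subbundles of $\mathcal{N}_D|_\gamma$ spanned by $\pi^*\nabla^S$--parallel spinors along null geodesics, which is the second assertion.

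The genuinely non--routine step is the cancellation in the middle of the above assembly: one must verify that all the $\omega^S$--free and all the Christoffel--containing terms coming from the three sources (Poisson bracket, subprincipal symbol, and the $Q_N$--conjugation in $\widetilde\sigma$) annihilate each other modulo $\mathrm{Im}\,c(\xi^\sharp)$. This is where axioms C3, C5, and the invertibility of $Q_N$ on timelike $N$ are essential; the coefficient $\tfrac12$ in front of the Poisson bracket in \eqref{denko} is what makes the bookkeeping balance, and any sign or factor slip in that computation would produce a spurious curvature term obstructing the identification with $\pi^*\nabla^S$.
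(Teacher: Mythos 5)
Your sketch takes a genuinely different route from the paper. You propose to compute all three terms of the Denker connection in an arbitrary coordinate system and local trivialization of $S$, and then verify by bookkeeping that the Christoffel, $\omega^S$, and $Q_N$--conjugation contributions cancel against each other modulo $\mathrm{Im}\,c(\xi^\sharp)$. The paper instead sidesteps the entire cancellation by a pointwise, tensorial argument: at a fixed $(p,\xi)$ on the light cone it picks normal coordinates adapted to $Z=\xi^\sharp$ and to $aY-bN$ (so that their covariant derivatives vanish at $p$) together with a synchronous trivialization $s_1,\dots,s_N$ of $S$ (so that $\nabla^S s_i$ vanishes at $p$). In such a frame the matrix entries $f_{ij}(x,\xi)=\langle -iZ\cdot s_i,s_j\rangle$ of $\sigma_1(D)$ have vanishing $x$--derivative at $p$, and the same holds for $\tilde\sigma$; the zero--order term and the Weyl correction in the subprincipal symbol vanish because $\omega^S(p)=0$ and the Christoffel symbols vanish. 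Hence both the Poisson--bracket term and the $i\tilde\sigma\,\sigma(D)^s$ term in \eqref{denko} die at $(p,\xi)$, leaving only $H_q$; and $\pi^*\nabla^S$ reduces to the same directional derivative in this frame. Since this holds at every point of the null cone, the two connections coincide. What the paper's adapted--frame approach buys you is precisely the avoidance of the ``genuinely non--routine'' cancellation you flag at the end of your sketch.

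That flagged step is also where your argument, as written, has a real gap. You assert that the $\omega^S$--free terms and the Christoffel--containing terms coming from the Poisson bracket, the subprincipal correction, and the differentiation of $Q_N$ in $\tilde\sigma$ must all annihilate each other, and that the factor $1/2$ ``makes the bookkeeping balance,'' but you never actually exhibit the cancellation; this is not a detail, it is the entire content of the theorem. Moreover, your disposal of the ``type (iv)'' terms needs more care: on the null cone $\mathrm{Im}\,c(\xi^\sharp)\subset\ker c(\xi^\sharp)=\mathcal{N}_D|_\xi$, so such terms do \emph{not} automatically drop out of a connection that is supposed to act on $\mathcal{N}_D$ --- a term valued in $\mathcal{N}_D$ is a legitimate contribution to $\nabla^A w$ when $w\in\mathcal{N}_D$. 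If you intend to discard them by appealing to the freedom in the choice of $\tilde\sigma$ and $q$ (Dencker's connection is only canonical modulo such ambiguities), you should say so explicitly and verify that the ambiguity is exactly of that form. As it stands, the sketch identifies the right ingredients (C3, C5, C7', $c(\xi^\sharp)^2=0$, invertibility of $Q_N$) but does not close the argument; the paper's choice of geodesic normal coordinates plus a $\nabla^S$--synchronous frame is the cleaner way to make all of these issues disappear simultaneously.
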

 \begin{proof}
 %Let $\gamma$ a null bicharacteristic such that $\dot{\gamma}(t_0)=$
  %$p$ such that $\pi_{*}(\dot{\gamma}(t_0))=Z$ choose coordinates around $p=\pi(\gamma(t_0)$ such that
  %$\nabla^g_{\partial_{x_i}}Z$=0. Lift $\nabla^S$ to $T^*X$. 
 \noindent On a point $(p,\xi)$ with $\xi^\sharp=Z=aY+bN$ on the light cone choose coordinates such that $\nabla^g_{{\partial_x}_k}Z=0$ and $\nabla^g_{{\partial_x}_k}(aY-bN)=0$. This is possible since $Z$ is lightlike meaning that $Y\neq0$.
 From $C8$ since we know the signature of 
 $\langle \cdot,\cdot \rangle$ on $S$ we can choose a trivialization $s_1,...,s_{N/2},s_{N/2+1},...,s_N$ such that $(\nabla^S_{e_i}e^j)(p)=0.$ Now the principal symbol $\sigma_{1}(D)$ over $T^*U$ corresponds to a matrix $$f_{ij}(x,\xi)=\langle (\sigma(D)(\xi)s_i,s_j)\varepsilon_{ij}=\langle -iZ\cdot s_i,s_j\rangle$$ where $\varepsilon_{ij}=\langle s_i,s_j\rangle$. Then in the point $(p,\xi)$ we have
 $$\partial_{x_k}f_{ij}=(\pi^*\nabla^S)_{\partial_{x_k}}(-iZ\cdot s_i,s_j)\varepsilon_{ij}=-i\langle \nabla^S_{\partial_{x_k}}(Z\cdot s_i),s_j\rangle \varepsilon_{ij}=-i\langle (\nabla^g_{\partial_{x_k}}Z)\cdot s_i,s_j\rangle=0$$
 stating no more than the compatibility with the Clifford action, the Levi Civita connection and the inner product on $S$.
  and a corresponding statement for 
 the symbol $\tilde{\sigma}$. In particular in the point $\xi$ on the lightcone the second and third terms in \eqref{denko} vanish and remains only the Lie derivative along 
 $H_{\|\cdot\|^2_g}$. The same for $\pi^*\nabla^S$.
 \end{proof}
 %\headheight=0pt

%\headsep=0pt

%\footskip=0pt

%\begin{center}
%\par\vspace {25mm}
%{\Large\sc    La Sapienza Universit\`a di \ Roma \ }\\[4ex]

%{\large Facolt\`a di Scienze Matematiche Fisiche e Naturali}\\[2ex]
%{\large Dottorato di Ricerca in Matematica }\\[1ex]
 %{\large XX Ciclo}\\

%\end{center}

%\begin{center}
%\par\vspace {35mm}

%{\Huge\bf A signature formula for foliations on manifolds with cylindrical ends }  \\

%\bigskip

%\bigskip

%{\Huge\bf  }

%\par\vspace{45mm}
%\end{center}

\end{document}